\newtheorem*{maintheorem*}{Main Theorem}
\newtheorem{theorem}{Theorem}[section]
\newtheorem{prop}[theorem]{Proposition}
\newtheorem{lem}[theorem]{Lemma}
\theoremstyle{definition}
\newtheorem{defn}[theorem]{Definition}
\newtheorem{rem}[theorem]{Remark}
\newtheorem{example}[theorem]{Example}
\numberwithin{equation}{section}
\newcommand{\nn}{\mathbb{N}}
\newcommand{\ppp}{\mathcal{P}}
\newcommand{\qq}{\mathbb{Q}}
\providecommand\ldb{\llbracket}
\providecommand\rdb{\rrbracket}
\newcommand{\ppf}{\mathcal{P}_{\text{fin}}}
\newcommand{\ppx}{\mathcal{P}_{\text{fin}, 0}}
\keywords{Minkowski sum, power monoid, Puiseux monoid, atomicity, factorization theory}
\subjclass[2020]{Primary: 20M13, 06F05; Secondary: 20M10, 20M14}
\begin{document}
	
	\mbox{}
	\title{On the atomicity of power monoids of Puiseux monoids}
	
	\author{Victor Gonzalez}
	\address{Department of Mathematics\\Miami Dade College\\Miami, FL 33135, USA}
	\email{victorm.gonzalez0202@gmail.com}
	
	\author{Eddy Li}
	\address{The Nueva School, San Mateo, CA 94403, USA}
	\email{eddylihere@gmail.com}
	
	\author{Henrick Rabinovitz}
	\address{Melrose School\\Melrose, MA 02176, USA}
	\email{hrey77@gmail.com}
	
	\author{Pedro Rodriguez}
	\address{School of Mathematics and Statistics\\Clemson University\\Clemson, SC 29634, USA}
	\email{pedror@clemson.edu}
	
	\author{Marcos Tirador}
	\address{Matcom, Universidad de La Habana, Habana 10400, Cuba}
	\email{marcosmath44@gmail.com}

\date{\today}

\begin{abstract}
	A submonoid of the additive group $\qq$ is called a Puiseux monoid if it consists of nonnegative rationals. Given a monoid $M$, the set consisting of all nonempty finite subsets of $M$ is also a monoid under the Minkowski sum, and it is called the (finitary) power monoid of $M$. In this paper we study atomicity and factorization properties in power monoids of Puiseux monoids. We specially focus on the ascent of the property of being atomic and both the bounded and the finite factorization properties (the ascending chain on principal ideals and the length-finite factorization properties are also considered here). We prove that both the bounded and the finite factorization properties ascend from any Puiseux monoid to its power monoid. On the other hand, we construct an atomic Puiseux monoid whose power monoid is not atomic. We also prove that the existence of maximal common divisors for nonempty finite subsets is a sufficient condition for the property of being atomic to ascend from a Puiseux monoid to its power monoid.
\end{abstract}

\bigskip
\maketitle

\bigskip
\section{Introduction}
\label{sec:intro}

Let $M$ be an (additive) monoid. For nonempty subsets $S$ and $T$ of $M$, the set $S+T := \{s+t \mid s \in S \text{ and } t \in T\}$ is called the Minkowski sum of $S$ and $T$ in $M$. It is clear that the set $\ppp(M)$ consisting of all nonempty subsets of $M$ is also a monoid under the Minkowski sum, which is called the power monoid of $M$. In addition, the submonoid of $\ppp(M)$ consisting of all its nonempty finite subsets is called the finitary power monoid of $M$ and is denoted by $\ppf(M)$. Power monoids and finitary power monoids were systematically investigated in the second half of the past century by Tamura et al. (see~\cite{tT86} and references therein). Two of the most investigated problems in the context of power monoids were the ascent of algebraic properties from the monoid $M$ to $\ppp(M)$ and the isomorphism problem (that is, whether the fact that $\ppp(M)$ and $\ppp(M')$ are isomorphic in certain prescribed class of monoids guarantees that $M$ and $M'$ are also isomorphic). For more into these two problems and further progress on the study of power monoids, see~\cite{jP86} and references therein.
\smallskip

A submonoid of the additive group $\qq$ is called a Puiseux monoid if it consists of nonnegative rationals. Puiseux monoids have played a crucial role to construct needed counterexamples in commutative ring theory; see, for instance, \cite[Theorem~1.3]{aG74} and \cite[Example~1.1]{AAZ90} (more recent applications of Puiseux monoids to commutative ring theory can be found in~\cite[Section~4]{CG19}, \cite[Section~3]{GL23a}, and \cite[Main Theorem]{GR23}). However, it was not until recently that a systematic investigation of the atomic structure of Puiseux monoids was initiated by Gotti~\cite{fG17}. Since then, Puiseux monoids have been actively investigated from the factorization-theoretical viewpoint (see \cite{GGT21} and references therein). For a survey on the atomicity and factorizations of Puiseux monoids, see~\cite{CGG20a}.
\smallskip

The central algebraic objects that we study in this paper are finitary power monoids of Puiseux monoids. For simplicity, from now on we will use the simpler term ``power monoid" to refer to ``finitary power monoids" (this should not cause any confusion because we do not deal at all with non-finitary power monoids). Additive monoids consisting of nonnegative integers, also known as numerical monoids, account up to isomorphism for all finitely generated Puiseux monoids. Factorizations and certain arithmetical aspects of the power monoid $\ppf(\nn_0)$ were studied by Fan and Tringali in~\cite{FT18}, while atomic and ideal-theoretical aspects of power monoids of numerical monoids were recently studied by Bienvenu and Geroldinger in~\cite{BG23}. Most recently, the isomorphism problem in the class consisting of all the (restricted) power monoids of Puiseux monoids was settled in~\cite{TY23} by Tringali and Yan. The primary problem we consider in this paper is the ascent of certain atomic and factorization properties from Puiseux monoids to their corresponding power monoids.
\smallskip

A commutative monoid is called atomic provided that every non-invertible element can be written as a finite sum of atoms (i.e., irreducible elements). The ascent of atomicity from the class of Puiseux monoids to that of their monoid algebras was investigated by Coykendall and Gotti in~\cite{CG19} and, more recently, by Gotti and Rabinovitz in~\cite{GR23} (the ascent of atomicity from the class of torsion-free commutative monoids to that of their monoid algebras was first posed as an open problem by Gilmer in~\cite[page 189]{rG84}). In Section~\ref{sec:atomicity}, we tackle the ascent of atomicity from the class of Puiseux monoids to that of their power monoids: we prove that, in general, the property of being atomic does not ascend to power monoids of Puiseux monoids. On the other hand, we prove that the same property does ascend if we restrict to the class of MCD Puiseux monoids (a commutative monoid is called an MCD monoid if every nonempty finite set has a maximal common divisor). In Section~\ref{sec:atomicity}, we also show that the ascending chain condition on principal ideals (ACCP) ascends to power monoids of Puiseux monoids (the ACCP has been largely investigated in connection to atomicity: see the recent papers~\cite{GL23,GL23a} and references therein).
\smallskip

Following~\cite{AAZ90}, we say that an atomic monoid is a bounded factorization monoid provided that every non-invertible element has a finite set of factorization lengths (the length of a factorization is the number of atoms it has, counting repetitions). Following the same paper, we say that an atomic monoid is a finite factorization monoid provided that every non-invertible element has only finitely many factorizations (clearly, every finite factorization monoid is a bounded factorization monoid). Both the bounded and the finite factorization properties have been actively investigated since they were introduced in the context of integral domains by Anderson, Anderson, and Zafrullah in~\cite{AAZ90} and generalized to the context of cancellative commutative monoids by Halter-Koch in ~\cite{fHK92} (see the recent survey~\cite{AG22} and references therein). The main purpose of Section~\ref{sec:BF and FF} is to consider the ascent of both the bounded and the finite factorization properties. We prove that both properties ascend to power monoids of Puiseux monoids. As the bounded factorization property, the length-finite factorization property, recently introduced by Geroldinger and Zhong in~\cite{GZ20}, is a weaker notion of the finite factorization property. We argue in Section~\ref{sec:BF and FF} that the length-finite factorization property does not ascend to power monoids of Puiseux monoids.

\bigskip
\section{Background}
\label{sec:background}

In the background section of this paper, we will be covering key concepts about commutative monoids, which are essential for understanding the results we will discuss later. For any terms or notations on commutative monoids not defined here, the reader can consult~\cite{pG01}. Additionally, \cite{GZ20} offers a recent survey of factorization in commutative monoids.

\medskip
\subsection{General Notation}

We define the set of natural numbers, excluding zero, as $\mathbb{N} := \{1, 2, \ldots\}$, and when including zero, it is represented as $\mathbb{N}_0 := \mathbb{N} \cup \{0\}$. For any two integers $x, y \in \mathbb{Z}$, the interval of integers between them is denoted by $\ldb x, y \rdb$, which is an empty set if $x > y$. Furthermore, for any subset $X$ of $\mathbb{R}$ and a real number $r$, the set $X_{\geq r}$ is conformed by all elements $x$ in $X$ that are greater than or equal to $r$. Similarly, $X_{> r}$ is used to represent elements strictly greater than $r$.

For every $q \in \mathbb{Q}_{> 0}$, there exist unique natural numbers $n$ and $d$ such that $q = n/d$ and $\gcd(n,d) = 1$, which we denote as $\mathsf{n}(q)$ and $\mathsf{d}(q)$, respectively. We call $\mathsf{n}(q)$ and $\mathsf{d}(q)$ the \emph{numerator} and \emph{denominator} of $q$, respectively. The $p_d$-adic valuation for a nonzero integer $n$ is the highest value in the set $\{ k \in \mathbb{N} \mid d^k \text{ divides } n \}$, denoted by $\mathsf{v}_d(n)$. For any rational number $q$, its $p_d$-adic valuation is defined as $\mathsf{v}_d(q) = \mathsf{v}_d(\mathsf{n}(q)) - \mathsf{v}_d(\mathsf{d}(q))$.

\medskip
\subsection{Commutative Monoids}

In this paper, a \emph{monoid} is understood as a commutative semigroup with an identity element. Let $M$ be a monoid. As we are assuming every monoid here is commutative, we will be using additive notation, unless otherwise is specified. In particular, ``+'' denotes the operation of $M$, while $0$ denotes the identity element. We define $M^\bullet$ as $M$ excluding the zero element, and $M$ is called \emph{trivial} if $M^\bullet$ is empty. The group of invertible elements within $M$ is represented as $M^{\times}$. The \emph{reduced monoid} of $M$, denoted $M_{\text{red}}$, is the quotient of $M$ by $M^{\times}$. In addition, we say that $M$ is \emph{reduced} if $M^\times$ is the trivial group, in which case we identify $M_{\text{red}}$ with $M$. 
The monoid $M$ is said to be \emph{torsion-free} if $nr = ns$ for some $n \in \mathbb{N}$ and $r,s \in M$ implies $r = s$. 
An element $t \in M$ is called \emph{cancellative} if for all $r,s \in M$, the equality $r+t = s+t$ implies that $r=s$, and the whole monoid $M$ is called \emph{cancellative}  We say that the monoid $M$ is \emph{cancellative} if every element of $M$ is cancellative. On the other hand, the monoid $M$ is called \emph{unit-cancellative} if for all $r,s \in M$, the equality $r+s = r$ guarantees that $s \in M^\times$. It follows from the given definitions that every cancellative monoid is unit-cancellative.

For elements $r, s \in M$, $s$ is said to \textit{divide} $r$ in $M$ if there exists $t \in M$ such that $r = s + t$; in this case, we write $s \mid_M r$. A submonoid $M'$ of $M$ is called a \textit{divisor-closed submonoid} if any element of $M$ dividing an element of $M'$ is in  $M'$ as well. Given a subset $S$ of $M$, the smallest submonoid of $M$ containing $S$ is denoted by $\langle S \rangle$, and in this case, $S$ is called a \textit{generating set} of $\langle S \rangle$. The monoid $M$ is \textit{finitely generated} if it can be expressed as $M = \langle S \rangle$ for some finite subset $S$ of $M$.

An element $a$ in $M \setminus M^{\times}$ is an \emph{atom} if for $a = r + s$ with $r,s \in M$, either $r$ or $s$ belongs to $M^{\times}$. The set of all atoms in $M$ is denoted by $\mathcal{A}(M)$. In a reduced monoid $M$, we note that $\mathcal{A}(M)$ is included in any generating set of $M$. An element $b \in M$ is called \emph{atomic} if it is in the submonoid of $M$ generated by $\mathcal{A}(M)$. Following \cite{pC68}, we say that $M$ is \emph{atomic} if every non-invertible element in $M$ is atomic.

We say that $d \in M$ is a \emph{maximal common divisor} (or \emph{mcd}) of $A \subset M$ if $d$ divides every element in $A$ and does not exists a non-invertible element $d' \in M$ such that $d' + d$ also divides every element in $A$. Notice that not every subset $A$ of $M$ has an mcd. We call $M$ a k-MCD monoid, for some $k \in \nn$, if every subset $A \subset M$ such that $|A| = k$, has a maximal common divisor. Furthermore, we call $M$ an MCD-monoid if $M$ is k-MCD for every $k \in \nn$.

A subset $I$ of $M$ is an \emph{ideal} if the set
\[
	I + M := \{r + s \mid r \in I \text{ and } s \in M\}
\]
is a subset of $I$. An ideal of $M$ is called \textit{proper} if it is strictly contained in $M$. The ideals of the form $b + M := \{b\} + M$ for some $b \in M$ are called \emph{principal ideals}. We say that $M$ satisfies the \emph{ascending chain condition on principal ideals} (ACCP) if every ascending chain $(r_n + M)_{n \in \nn}$ of principal ideals of $M$ eventually stabilizes; that is, there is an $n_0 \in \nn$ such that $r_n + M = r_{n+1} + M$ for every $n \ge n_0$. It follows from~\cite[Theorem~2.28]{FT18} that if a unit-cancellative monoid satisfies the ACCP, then it must be atomic.

\medskip
\subsection{Factorizations}

A multiplicative monoid $F$ is said to be \emph{free on} a subset $A$ if every element $x \in F$ has a unique representation in the form
\[
	x = \prod_{a \in A} a^{\mathsf{v}_a(x)},
\]
where $\mathsf{v}_a(x) \in \nn_0$ and $\mathsf{v}_a(x) > 0$ only for finitely many $a \in A$. For any given set $A$, there is a unique free monoid $F$ on $A$, up to isomorphism. The free monoid formed on $\mathcal{A}(M_{\text{red}})$, labeled $\mathsf{Z}(M)$, is called the \emph{factorization monoid} of $M$, and its elements are known as \emph{factorizations}. For a factorization $z = a_1 \cdots a_n$ in $\mathsf{Z}(M)$, $n$ is called the \emph{length} of $z$, and is denoted by $|z|$. The monoid homomorphism from $\mathsf{Z}(M)$ to $M_{\text{red}}$, which takes each atom to itself, is called the \emph{factorization homomorphism} of $M$. The set of all factorizations of an element $x \in M$, named $\mathsf{Z}(x)$, is the inverse image of $x$ under the factorization homomorphism. Observe that $M$ is atomic if and only if every element in $M$ has at least one factorization. For each $x \in M$, the \emph{set of lengths} of $x$ is defined by
\[
	\mathsf{L}(x) := \mathsf{L}_M(x) := \{|z| :  z \in \mathsf{Z}(x)\}.
\]
The monoid $M$ is called a \emph{UFM} (\emph{unique factorization monoid}) if each element has exactly one factorization, an \emph{FFM} (\emph{finite factorization monoid}) if each element has a finite number of factorizations, and an \emph{HFM} (\emph{half-factorial monoid}) if every factorization of the same element has the same length. A monoid $M$ is called \emph{BFM} (\textit{bounded factorization monoid}) if it has a finite upper bound on the lengths of factorizations for each of its elements. It follows from~\cite[Theorem~2.28 and Corollary~2.29]{FT18} that if a unit-cancellative monoid is a BFM, then it must satisfy the ACCP. We call the monoid $M$ an \emph{LFM} (\emph{length-factorial monoid}) if every element of $M$ has at most one factorization of length $k$ for each $k \in \nn$. Observe that if $M$ is both HFM and LFM, then $M$ is a UFM. Additionally, we say that $M$ is an \textit{LFFM} (\emph{length-finite factorization monoid}) if the set $\{z \in \mathsf{Z}(x) : |z| = k\}$ is finite for every $x \in M$ and $k \in \nn$. Observe that if the monoid $M$ is an LFFM and a BFM at the same time, then it is an FFM.

\medskip
\subsection{Numerical and Puiseux Monoids}

A \emph{numerical monoid}\footnote{Numerical monoids are often called ``numerical semigroups" in the literature.} is a submonoid of $(\mathbb{N}_0,+)$ characterized by having a finite complement in $\mathbb{N}_0$. For a numerical monoid $N$ different from $\nn_0$, its largest element is referred to as the \emph{Frobenius number} of $N$, denoted as $F(N)$. Numerical monoids are finitely generated and, therefore, FFMs (see \cite[Proposition~2.7.8]{GH06}). Numerical monoids have been the subject of an extensive study, revealing connections across various mathematical fields and practical applications (see \cite{GR09, AG16} for a comprehensive overview).

On the other hand, a \emph{Puiseux monoid} is a submonoid of $(\mathbb{Q}_{\ge 0},+)$. 
In contrast to numerical monoids, Puiseux monoids may be neither finitely generated nor atomic, as it is the case of $\qq_{\ge 0}$, whose set of atoms is empty. Interestingly, some atomic Puiseux monoids are not BFMs, such as $\langle 1/p \mid p \in \mathbb{P} \rangle$, while others can be BFMs but not FFMs, like $\{0\} \cup \mathbb{Q}_{\ge 1}$. The exploration of Puiseux monoids, particularly in factorization theory and their relevance in both commutative and non-commutative contexts, has become a subject of recent scholarly interest (see \cite{CGG20, CGG20a, CG19, aG74, BG20}).

We say that a sequence of rational numbers is \emph{well-ordered} (resp., \emph{co-well-ordered}) if it contains no strictly decreasing (resp., increasing) subsequence. Following~\cite{hP22}, we say that a Puiseux monoid $M$ is \emph{well-ordered} (resp., \emph{co-well-ordered}) if it can be generated by a well-ordered (resp., co-well-ordered) sequence. There are atomic Puiseux monoids that are neither well-ordered nor co-well-ordered (see \cite[Example~3.1]{JKK23}).

\medskip
\subsection{Power Monoids and Restricted Power Monoids}

Let $M$ be a monoid. The concept of power monoids emerges from considering the set conformed by all nonempty finite subsets of a monoid $M$, which we denote as $\mathcal{P}_{\text{fin}}(M)$. For any two elements $S, T \in \mathcal{P}_{\text{fin}}(M)$, their sum is defined by
\[
	S +' T := \{s+t \mid s \in S \text{ and } t \in T \}.
\]
This operation endows $\mathcal{P}_{\text{fin}}(M)$ with a monoid structure, hereafter referred to as $+$ for simplicity  (this will hardly cause any ambiguity).

\begin{defn}
	For a monoid $M$, the \emph{power monoid} of $M$ is the monoid $\mathcal{P}_{\text{fin}}(M)$ under the sum operation defined before.
\end{defn}
Further, we introduce $\mathcal{P}_{\text{fin}, \times}(M)$, which is the subset of $\mathcal{P}_{\text{fin}}(M)$ consisting of those elements intersecting with $M^{\times}$; that is,
\[
	\mathcal{P}_{\text{fin}, \times}(M) := \{S \in \mathcal{P}_{\text{fin}}(M) \mid S ~\cap~ M^{\times} \text{ is nonempty} \}.
\]
This subset forms a submonoid, known as the restricted power monoid of $M$.

\begin{defn}
	For a monoid $M$, the \emph{restricted power monoid} of $M$ is the submonoid $\mathcal{P}_{\text{fin}, \times}(M)$ of  $\mathcal{P}_{\text{fin}}(M)$.
\end{defn}

One can readily check that $\ppf$ and $	\mathcal{P}_{\text{fin}, \times}(M)$ are cancellative if and only if $M$ is the trivial monoid. In addition, it follows from \cite[Proposition~3.5]{FT18} that $\ppf$ and $\mathcal{P}_{\text{fin}, \times}(M)$ are unit-cancellative when $M$ is a totally ordered monoid, so the power monoid and the restricted power monoid of any given Puiseux monoid are unit-cancellative.

\bigskip
\section{Atomicity in Power Monoids of Puiseux Monoids}
\label{sec:atomicity}

The main goal of this section is to investigate when the property of being atomic ascends from a Puiseux monoid to its corresponding power monoid (i.e., to investigate under which conditions the power monoid of an atomic Puiseux monoid is also atomic). First, we argue the following lemma.

\begin{lem}\label{lem:size of the sum}
	Let $M$ be a Puiseux monoid. Let $B$ be an element of $\mathcal{P}_{\emph{fin}}(M)$. Then the following statements hold.
	\begin{enumerate}
		\item If $|B|=1$, then $|B + C| = |C|$ for every $C \in \mathcal{P}_{\emph{fin}}(M)$.
		\smallskip
		
		\item If $|B| \ge 2$, then $|B + C| > |C|$ for every $C \in \mathcal{P}_{\emph{fin}}(M)$.
	\end{enumerate}
\end{lem}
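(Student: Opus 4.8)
The plan is to exploit the total order on the Puiseux monoid $M$, which as a submonoid of $(\qq_{\ge 0}, +)$ inherits the standard order on $\qq$. Part (1) is immediate: if $B = \{b\}$ is a singleton, then $B + C = \{b + c \mid c \in C\}$, and the translation map $c \mapsto b + c$ is a bijection from $C$ onto $B + C$ because $M$ is cancellative as a subset of $\qq$ (addition by a fixed rational is injective). Hence $|B + C| = |C|$ for every $C \in \ppf(M)$, with no further work required.

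For part (2), the key observation is that $B + C$ contains at least $|C| + 1$ distinct elements whenever $|B| \ge 2$. First I would write $B = \{b_1 < b_2 < \dots < b_m\}$ and $C = \{c_1 < c_2 < \dots < c_n\}$ using the total order, where $m = |B| \ge 2$ and $n = |C|$. The elements $b_1 + c_1 < b_1 + c_2 < \dots < b_1 + c_n$ are $n$ distinct elements of $B + C$ (the translates of $C$ by $b_1$), so $|B + C| \ge n = |C|$; the task is to produce one more. I would single out the largest element $b_m + c_n$ of the chain $b_m + c_1 < b_m + c_2 < \dots < b_m + c_n$, and argue that it is strictly larger than every element in the $b_1$-translate, since $b_m + c_n > b_1 + c_n \ge b_1 + c_j$ for all $j$, using $b_m > b_1$. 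Thus $b_m + c_n$ is a new element not among the $n$ translates $\{b_1 + c_j\}$, giving $|B + C| \ge n + 1 > |C|$.

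The argument is essentially a counting estimate built from the order structure, so there is no serious obstacle; the only point requiring minor care is to make explicit that the $n$ elements $b_1 + c_1, \dots, b_1 + c_n$ are pairwise distinct and that the extra element $b_m + c_n$ genuinely falls outside that set, which follows cleanly from the strict inequalities $b_m > b_1$ and $c_1 < \dots < c_n$. I would present the comparison $b_m + c_n > b_1 + c_n \ge b_1 + c_j$ as the crux, concluding that $B + C \supseteq \{b_1 + c_1, \dots, b_1 + c_n, b_m + c_n\}$ is a set of $n + 1$ distinct elements, which establishes $|B + C| > |C|$.
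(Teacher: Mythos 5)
Your proposal is correct and follows essentially the same argument as the paper: translate $C$ by the smallest element of $B$ to get $|C|$ distinct elements, then observe that the sum of a larger element of $B$ with $\max C$ exceeds everything in that translate, yielding one extra element. The paper phrases this with an arbitrary pair $b_1 < b_2$ in $B$ rather than the extremes $\min B$ and $\max B$, but the counting idea is identical.
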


\begin{proof}
	(1) This follows immediately.
	\smallskip
	
	(2) Let $b_1$ and $b_2$ be elements of $B$, with $b_1 < b_2$. We have that $|\{b_1\} + C| = |C|$ since Puiseux monoids are cancellative. Let $c$ be the maximum element of $C$. Then $c + b_2 > c + b_1$ and, therefore, $c + b_2$ is not in $\{b_1\} + C$. Hence $|B + C| \ge |\{b_1\} + C| + 1 > |C|$, which concludes the proof.
\end{proof}

It is hardly a surprise that atomicity does not ascend from a Puiseux monoid to its power monoid (as atomicity does not ascend neither to polynomial extensions~\cite{mR93} nor to monoid algebras over fields~\cite{CG19}). However, it does if we impose the existence of maximal common divisors.

\begin{theorem} \label{thm:ascend of atomicity}
	For an atomic Puiseux monoid $M$, the following statements hold.
	\begin{enumerate}
		\item If $M$ is an MCD-monoid, then $\ppp_{\emph{fin}}(M)$ is atomic.
		\smallskip
		
		\item If $M$ is not $2$-MCD, then $\ppp_{\emph{fin}}(M)$ is not atomic.
	\end{enumerate} 
\end{theorem}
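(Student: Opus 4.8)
The plan is to analyze the atoms of $\ppf(M)$ finely enough that atomicity can be tracked through the single invariant $|S|$, using Lemma~\ref{lem:size of the sum} to control how cardinalities behave under the Minkowski sum. Two observations drive everything. First, since $M$ is reduced, so is $\ppf(M)$, and a singleton $\{s\}$ is atomic in $\ppf(M)$ whenever $s$ is atomic in $M$: writing $s = a_1 + \cdots + a_m$ with $a_i \in \mathcal{A}(M)$ gives $\{s\} = \{a_1\} + \cdots + \{a_m\}$, and each $\{a_i\}$ is an atom of $\ppf(M)$ because, by Lemma~\ref{lem:size of the sum}, a singleton can only factor into singletons. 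Second, if $d$ is a maximal common divisor of a finite set $S$, then writing $S = \{d\} + T$ with $T := \{s - d : s \in S\}$ exhibits $T$ as a set of the same cardinality whose only common divisors are invertible (equivalently, $0$ is an mcd of $T$); this is immediate from the definition of mcd together with cancellativity. The purpose of passing to $T$ is that $T$ has no non-invertible singleton divisor, so any factorization of $T$ must split it into pieces of strictly smaller cardinality.

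\textbf{Proof of (1).} Assuming $M$ is atomic and MCD, I would argue by strong induction on $n = |S|$ that every $S \in \ppf(M)$ is atomic. For $n = 1$ we have $S = \{s\}$, which is atomic since $M$ is atomic. For the inductive step, let $|S| = n \ge 2$ and factor out an mcd $d$ of $S$ (which exists because $M$ is MCD), writing $S = \{d\} + T$ with $0$ an mcd of $T$ and $|T| = n$. As $\{d\}$ is atomic, it suffices to show that $T$ is atomic. If $T$ is an atom we are done; otherwise $T = B + C$ with $B, C$ non-invertible. The crucial point is that neither $B$ nor $C$ can be a singleton, since a non-invertible singleton $\{b\}$ dividing $T$ would be a non-invertible common divisor of $T$, contradicting that $0$ is an mcd of $T$. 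Hence $|B|, |C| \ge 2$, so the second part of Lemma~\ref{lem:size of the sum} gives $|B| < |B + C| = n$ and $|C| < n$; the inductive hypothesis makes $B$ and $C$ atomic, and therefore $T$, and hence $S$, is atomic.

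\textbf{Proof of (2).} Assuming $M$ is atomic but not $2$-MCD, fix distinct $a, b \in M$ for which $\{a,b\}$ has no mcd, and I claim $S := \{a,b\}$ is not atomic. Suppose instead $S = A_1 + \cdots + A_k$ with each $A_i$ an atom. Lemma~\ref{lem:size of the sum} forces exactly one factor, say $P = \{p_1, p_2\}$, to have cardinality $2$ and all others to be singletons: two factors of cardinality $\ge 2$ would already push $|S|$ above $2$, while all singletons would yield a singleton. Collecting the singleton factors into $\{s\}$, we get $S = \{s\} + P$, so $\{a,b\} = \{s + p_1, s + p_2\}$. Now $P$ being an atom of size $2$ forces $0$ to be an mcd of $\{p_1, p_2\}$: a non-invertible common divisor $e$ of $p_1, p_2$ would give $P = \{e\} + \{p_1 - e, p_2 - e\}$, a sum of two non-invertible elements. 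Finally I would check that $s$ is then an mcd of $\{a, b\}$: clearly $s \mid_M a$ and $s \mid_M b$, and any non-invertible $e$ with $e + s$ dividing both $a$ and $b$ would, after cancelling $s$, divide both $p_1$ and $p_2$, contradicting that $0$ is an mcd of $\{p_1, p_2\}$. This contradicts the choice of $a, b$, so $S$ is not atomic and hence neither is $\ppf(M)$.

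\textbf{Main obstacle.} The genuine subtlety is that factoring out the mcd does not decrease $|S|$ (removing a singleton preserves cardinality), so a naive induction on cardinality stalls. The role of the mcd is precisely to strip away every non-invertible singleton divisor, after which Lemma~\ref{lem:size of the sum} guarantees that any further factorization splits the set into strictly smaller pieces; this is what makes the induction in~(1) run, and, read contrapositively, is what pins down the shape $\{s\} + P$ of any hypothetical factorization in~(2). The remaining work, verifying that $0$ is an mcd of the reduced set $T$ in~(1) and that $s$ is an mcd of $\{a,b\}$ in~(2), is routine bookkeeping in which the cancellativity of Puiseux monoids is used.
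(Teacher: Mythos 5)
Your proposal is correct, and while part (2) essentially coincides with the paper's argument, part (1) takes a genuinely different route. The paper proves (1) by contradiction: a non-atomic element of $\ppf(M)$ yields an infinite strictly descending chain of non-atomic elements; Lemma~\ref{lem:size of the sum} forces the cardinalities along such a chain to stabilize at some value $k$; choosing a chain minimizing $k$ produces an element $B_N$ all of whose divisors have cardinality $1$ or $k$; stripping off an mcd $d$ then exhibits $B_N = A + \{d\}$ with $A$ an actual atom of $\ppf(M)$, contradicting the non-atomicity of $B_N$. You replace this minimal-chain machinery with a direct strong induction on $|S|$: strip an mcd to get $T$ with no nonzero singleton divisor, note that any decomposition of $T$ into two non-invertible summands must then have both summands of cardinality at least $2$, hence (again by Lemma~\ref{lem:size of the sum}) of cardinality strictly less than $|T|$, and invoke the inductive hypothesis. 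Both arguments rest on the same two pillars---the cardinality lemma and the mcd-stripping trick---but your induction is more economical: it avoids constructing descending chains (which implicitly needs that every non-atomic element has a proper non-atomic divisor, via unit-cancellativity) and avoids the paper's terse claim that every divisor of $B_N$ has cardinality $1$ or $k$, a point the paper leaves mostly to the reader. What the paper's version buys is the sharper structural byproduct that stripping an mcd from a minimal counterexample yields an atom, not merely an atomic element. For part (2), both proofs show that a factorization of $\{b_1, b_2\}$ into atoms must consist of exactly one two-element atom plus singletons, and both reach a contradiction with the nonexistence of an mcd by the same cancellation computation; you phrase the contradiction as ``the sum of the singleton factors is an mcd of $\{b_1,b_2\}$,'' the paper as ``the two-element factor is not an atom,'' and these are interchangeable. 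Your version has the minor advantage of handling the case where $\{b_1,b_2\}$ is itself an atom uniformly (the singleton sum is then $0$), whereas the paper disposes of that case in a separate preliminary step.
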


\begin{proof}
	 Let $M$ be an atomic Puiseux monoid and, or simplicity, set $\mathcal{P} :=  \mathcal{P}_{\text{fin}}(M)$. 
	 \smallskip
	 
	(1) Suppose that $M$ is an MCD-monoid. Assume, by way of contradiction, that $\mathcal{P}$ is not atomic, and then fix a non-atomic element $B_0$ of $\ppp$. Since $B_0$ is non-atomic, there exists a strictly descending chain of non-atomic elements of $\ppp$ starting at $B_0$, that is, a sequence $(B_n)_{n \in \nn_0}$ of non-atomic elements of $\mathcal{P}$ such that $B_{n + 1} \mid_\mathcal{P} B_n$ and $B_{n + 1} \neq B_n$ for every $n \in \mathbb{N}_0$. Now Lemma~\ref{lem:size of the sum} guarantees the existence of $N \in \mathbb{N}$ such that $k := |B_{n}| = |B_N|$ for every $n \ge N$. Among all the strictly descending chains of non-atomic elements of $\mathcal{P}$ starting at $B_0$, assume that $(B_n)_{n \ge 0}$ is on minimizing $k$. Thus, every divisor of $B_N$ in $\mathcal{P}$ has cardinality either $1$ or $k$. Write $B_N = \{b_1, b_2, \dots, b_k\}$. Let $d$ be a maximal common divisor of $b_1, b_2, \dots, b_k$ in $M$. Then the only common divisor of $A := \{b_1 - d, b_2 - d, \dots, b_k - d \}$ is $\{0\}$. The Lemma~\ref{lem:size of the sum}, along with the minimality of $k$, ensures that in any decomposition of $A$ as a sum of two elements in $\mathcal{P}$, one of them must be a singleton, and so one of them is $\{0\}$. Thus, $A$ is an atom of $\mathcal{P}$. In addition, observe that $\{d\}$ is an atomic element in $\mathcal{P}$ because $d$ is atomic in $M$. Since $B_N = A + \{d\}$, the fact that both $A$ and $\{d\}$ are atomic elements of $\mathcal{P}$ implies that $B_N$ is atomic, which is a contradiction. Hence $\mathcal{P}$ must be atomic.
	\smallskip
	
	(2) Suppose that $M$ is not 2-MCD. Assume, by way of contradiction, that $\mathcal{P}$ is atomic. Let $b_1, b_2$ be two elements of $M$ that do not have a maximal common divisor. Consider the element $B := \{b_1, b_2\}$ of $\mathcal{P}$. Because $0$ is not a maximal common divisor of $b_1$ and $b_2$ in $M$, we can take a nonzero $d \in M$ such that $d \mid_M b_1$ and $d \mid_M b_2$. Thus, $B = \{b_1-d, b_2-d\} +\{d\}$, and so $B$ is not an atom of $\mathcal{P}$. As $\mathcal{P}$ is atomic, we can write $B = A_1 + A_2 + \dots +A_k$ for some $k \ge 2$ and atoms $A_1, A_2, \dots, A_k$ of $\mathcal{P}$. Since $|B| = 2$, there exists an element among $A_1, \dots, A_k$ with size at least $2$. Then we can assume that $|A_1| \ge 2$. Thus, it follows from Lemma~\ref{lem:size of the sum} that
	\[
		2 = |B| = |A_1 + (A_2 + \cdots + A_k)| > |A_2 + \cdots + A_k|,
	\]
	and so $|A_i| = 1$ for every $i \in \ldb 2,k \rdb$, and so $|A_1| = |B| = 2$. Write $A_i = \{a_i\}$ for every $i \in \ldb 2,k \rdb$, and set $a := a_2 + \dots + a_k$. Then $A_1 = \{b_1 - a, b_2 - a\}$. Since $b_1$ and $b_2$ have no maximal common divisor, there exists a nonzero $d' \in M$ such that $d' \mid_M b_1 - a$ and $d' \mid_M b_2 - a$. Then $A_1 = \{b_1 - a - d', b_2 - a - d'\} + \{d'\}$, which contradicts that $A_1$ is an atom of $\mathcal{P}$.
\end{proof}

As a consequence, showing that the property of being atomic does not ascend to power monoids in the class of Puiseux monoids amounts to constructing an atomic Puiseux monoid that is not a  2-MCD monoid. We will do so in the following example.
	
\begin{example} \label{ex:atomic PM not 2-MCD}
	We will exhibit an atomic Puiseux monoid $M$ that is not a $2$-MCD monoid.
	
	First, let us inductively construct a sequence of prime numbers with certain desired properties. Take $p_0 = 17$, and then suppose that for $n \in \mathbb{N}_0$, we have chosen primes $p_0, \dots, p_{3n}$ such that $p_i > 15 \cdot 2^i$ for every $i \in \ldb 0, 3n \rdb$. Now take primes $p_{3n+1}, p_{3n+2}$, and $p_{3n+3}$ such that $p_{3n+3} > p_{3n+2} > p_{3n+1} > \max\{p_{3n}, 15 \cdot 2^{3n+3} \}$ and
	\begin{equation} \label{eq:I}
		p_{3n+1} > \max \bigg\{ \mathsf{n}\bigg(  \frac45 - \sum_{i=0}^n \frac1{p_{3i}} \bigg), \mathsf{n}\bigg( \frac67 - \sum_{i=0}^n \frac1{p_{3i}} \bigg) \bigg\}.
	\end{equation}
	Then, we have inductively constructed a strictly increasing sequence of primes $(p_n)_{n \ge 0}$ such that for every $n \in \mathbb{N}_0$, both the inequality~\eqref{eq:I} and $p_n > 15 \cdot 2^n$ hold. Therefore
	\begin{equation} \label{eq:II}
		\sum_{n=0}^\infty \frac 1{p_n} < \frac{1}{15} \sum_{n=0}^\infty \frac{1}{2^n} = \frac 2{15} < \frac 17.
	\end{equation}
	Now, let $M$ be the additive submonoid of $\mathbb{Q}$ defined as follows:
	\[
		M := \bigg\langle \frac{1}{p_{3n}}, \ \frac{1}{p_{3n+1}} \bigg( \frac45 - \sum_{i=0}^n \frac1{p_{3i}} \bigg), \frac{1}{p_{3n+2}} \bigg( \frac67 - \sum_{i=0}^n \frac1{p_{3i}} \bigg) \ \Big{|} \ n \in \mathbb{N}_0 \bigg\rangle.
	\]
	It follows from~\eqref{eq:II} that $\sum_{n=0}^\infty \frac 1{p_n} < \frac 45$ and, therefore, $M$ is a Puiseux monoid. For every $n \in \mathbb{N}_0$, set
	\[
		a_n := \frac{1}{p_{3n}}, \quad b_n :=  \frac{1}{p_{3n+1}} \bigg( \frac45 - \sum_{i=0}^n \frac1{p_{3i}} \bigg), \quad \text{and} \quad c_n :=  \frac{1}{p_{3n+2}} \bigg( \frac67 - \sum_{i=0}^n \frac1{p_{3i}} \bigg).
	\]
	We claim that $M$ is atomic with $\mathcal{A}(M) = \{a_n, b_n, c_n \mid n \in \mathbb{N}_0\}$. To prove our claim, it suffices to show that $\{a_n, b_n, c_n \mid n \in \mathbb{N}_0 \} \subseteq \mathcal{A}(M)$. Observe that, for each $n \in \mathbb{N}_0$, the generator $b_n$ is the only generator with negative $p_{3n+1}$-adic valuation (which is negative by virtue of the inequality \eqref{eq:I}). As a consequence, we infer that $b_n$ is an atom of $M$. Similarly, for each $n \in \mathbb{N}_0$, the generator $c_n$ is the only generator with negative $p_{3n+2}$-adic valuation, and so $c_n$ is also an atom of $M$. To argue that $a_n \in \mathcal{A}(M)$ for every $n$, fix $j \in \mathbb{N}_0$, and write
	\[
		a_j = \sum_{i=0}^N \alpha_i a_i + \sum_{i=0}^N \beta_i b_i + \sum_{i=0}^N \gamma_i c_i
	\]
	for some $N \in \mathbb{N}$ and coefficients $\alpha_i, \beta_i, \gamma_i \in \mathbb{N}_0$ for every $i \in \ldb 0, N \rdb$. Suppose, for the sake of a contradiction, that there exists $k \in \ldb 0,N \rdb$ such that $\beta_k \neq 0$. The fact that the $p_{3k+1}$-adic valuation of $a_j$ is $0$, along with the inequality $p_{3k+1} > \mathsf{n}\big( \frac45 - \sum_{i=0}^k \frac1{p_{3i}} \big)$, guarantees that $\beta_k$ is a multiple of $p_{3k+1}$. As a result,
	\[
		\frac1{p_{3j}} = a_j > \beta_k b_k >  \frac45 - \sum_{i=0}^k \frac1{p_{3i}} > \frac45 - \frac17 > \sum_{n=0}^\infty \frac{1}{p_n},
	\]
	which is a contradiction. Hence $\beta_i = 0$ for every $i \in \ldb 0, N \rdb$. In a similar manner, we can argue that $\gamma_i = 0$ for every $i \in \ldb 0, N \rdb$. Lastly, applying $p_{3i}$-adic valuation to both sides of the equality $a_j = \sum_{i=0}^N \alpha_i a_i$ for every $i \in \ldb 0, N \rdb \setminus \{j\}$, we obtain that $\alpha_i = 0$ for any $i \in \ldb 0,N \rdb \setminus \{j\}$, and so $a_j$ is an atom of $M$. Hence $\{a_n, b_n, c_n \mid n \in \mathbb{N}_0 \} \subseteq \mathcal{A}(M)$, and the claim is established. Hence the Puiseux monoid $M$ is atomic with
	\[
		\mathcal{A}(M) = \{a_n, b_n, c_n \mid n \in \mathbb{N}_0\}.
	\]
	
	Finally, we claim that $M$ is not a $2$-MCD monoid. Proving this amounts to showing that $\frac{4}{5}$ and $\frac{6}{7}$ have no maximal common divisor in~$M$. Observe that from the definition of $M$, we obtain that $\frac45 = p_1 b_0 + a_0 \in M$, and in the same way we can see that $\frac67 \in M$. To show that $\frac{4}{5}$ and $\frac{6}{7}$ have no maximal common divisor in~$M$, write $\frac{4}{5} = q+d$ and $\frac 67 = r+d$ for some $q,r,d \in M$.
	\[
		\frac45 = q+d = \sum_{i=0}^{N'} \alpha'_i a_i + \sum_{i=0}^{N'} \beta'_i b_i + \sum_{i=0}^{N'} \gamma'_i c_i
	\]
	for some $N' \in \mathbb{N}$ and coefficients $\alpha'_i, \beta'_i, \gamma'_i \in \mathbb{N}_0$ for every $i \in \ldb 0, N' \rdb$. As $\frac45$ has negative $5$-adic valuation, $\beta_k \neq 0$ for some $k \in \ldb 0,N' \rdb$.
	
	Now the fact that the $p_{3k+1}$-adic valuation of $\frac45$ is $0$, in tandem with the inequality $p_{3k+1} > \mathsf{n}\big( \frac45 - \sum_{i=0}^k \frac1{p_{3i}} \big)$, implies that $\beta_k$ is a multiple of $p_{3k+1}$. Thus, any factorization of $\frac45$ contains at least $p_{3k+1}$ copies of the atom $b_k$ for some $k \in \mathbb{N}_0$. In a completely similar way, we can argue that every factorization of $\frac67$ must contain at least $p_{3\ell+2}$ copies of the atom $c_\ell$ for some $\ell \in \mathbb{N}_0$.
	
	We proceed to argue that $d \in \langle a_n \mid n \in \mathbb{N}_0 \rangle$. Suppose, by way of contradiction, that a factorization of $d$ contains an atom of the form $b_j$ for some $j \in \mathbb{N}_0$. This will ensure the existence of a factorization $z'$ of $\frac67 = r+d$ containing at least a copy of the atom $b_j$ and, therefore, at least $p_{3j+1}$ copies of the atom $b_j$ (here we are using once again the inequality $p_{3j+1} > \mathsf{n}\big( \frac45 - \sum_{i=0}^j \frac1{p_{3i}} \big)$). However, as we have seen in the previous paragraph, $z'$ must also contain at least $p_{3\ell+2}$ copies of the atom $c_\ell$ for some $\ell \in \mathbb{N}_0$. Therefore
	\[
		\frac67 \ge p_{3j+1} b_j + p_{3\ell+1} b_\ell = \bigg( \frac45 - \sum_{i=0}^j \frac1{p_{3i}} \bigg) + \bigg( \frac67 - \sum_{i=0}^\ell \frac1{p_{3i}} \bigg) > \frac67,
	\]
	which is a contradiction (the last inequality follows from the fact that $\sum_{n=0}^\infty \frac1{p_n} < \frac17$). Thus, no factorization of $d$ in $M$ contains an atom in $\{b_n \mid n \in \mathbb{N}_0\}$. In a similar way, we can show that no factorization of $d$ in $M$ contains an atom in $\{c_n \mid n \in \mathbb{N}_0 \}$. Since $M$ is atomic, $d \in \langle a_n \mid n \in \mathbb{N}_0 \rangle$, as desired.
	
	Now suppose that $z := z_q + z_d$ is a factorization of $\frac45$, where $z_q$ is a factorization of $q$ and $z_d$ is a factorization of $d$. As $z$ must contain at least $p_{3k+1}$ copies of the atom $b_k$ for some $k \in \mathbb{N}_0$, the fact that $z_d$ cannot contain any copy of the atom $b_k$ ensures that $z_q$ contains at least $p_{3k+1}$ copies of the atom $b_k$. Thus, $p_{3k+1} b_k = \frac45 - \sum_{i=0}^k \frac1{p_{3i}}$ divides $q$ in $M$. Now for each $m > k$, the fact that $a_m$ divides $\frac45 - \sum_{i=0}^k \frac1{p_{3i}}$ in $M$ guarantees that $a_m$ divides $q$ in $M$. Similarly, we can show that $p_{3\ell + 2} c_\ell = \frac67 - \sum_{i=0}^\ell \frac1{p_{3i}}$ divides $r$ in $M$ for some $\ell \in \mathbb{N}_0$, and so that for each $m > \ell$, the element $a_m$ divides $r$ in $M$. From the last two assertions, we infer that if we take $m \in \mathbb{N}$ with $m > \max\{k,\ell\}$, then the atom $a_m$ is a common divisor of both $q$ and $r$ in $M$, which means that $d$ is not a maximal common divisor of $\frac45$ and $\frac67$ in $M$. Hence $\frac45$ and $\frac67$ do not have a maximal common divisor in $M$, and so we conclude that $M$ is not a $2$-MCD monoid.
\end{example}

\begin{rem}
	According to \cite[Proposition 3.2(v)]{FT18}, when a monoid $M$ is cancellative, if the power monoid of $M$ is atomic, then $M$ must be atomic. In light of Example~\ref{ex:atomic PM not 2-MCD}, we conclude that the converse of this statement does not hold.
\end{rem}

In the following result we prove that, unlike atomicity, the ACCP ascends from a Puiseux monoid to its power monoid.

\begin{prop} \label{prop:ACCP ascends from PM to PM}
	If a Puiseux monoid $M$ satisfies the ACCP, then $\ppp_{\emph{fin}}(M)$ also satisfies the ACCP.
\end{prop}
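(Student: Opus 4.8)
The plan is to argue by contradiction, reducing an infinite strictly ascending chain of principal ideals in $\mathcal{P} := \mathcal{P}_{\text{fin}}(M)$ to an infinite strictly ascending chain of principal ideals in $M$, thereby contradicting the ACCP of $M$. First I would record that $\mathcal{P}$ is reduced: since $M$ is a Puiseux monoid, $M^\times = \{0\}$, and from this one checks that the only unit of $\mathcal{P}$ is $\{0\}$. Combined with the fact (noted in the Background) that $\mathcal{P}$ is unit-cancellative, this lets me translate the failure of the ACCP in $\mathcal{P}$ into the existence of a sequence $(B_n)_{n\ge 0}$ in $\mathcal{P}$ with $B_{n+1}\mid_{\mathcal{P}} B_n$ and $B_n \ne B_{n+1}$ for all $n$. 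Indeed, in a unit-cancellative reduced monoid, mutual divisibility forces equality, so a strictly ascending chain of principal ideals corresponds exactly to a strictly descending chain under divisibility (passing to a subsequence if necessary so that every step is strict).

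The engine of the proof is Lemma~\ref{lem:size of the sum}. Writing $B_n = B_{n+1} + C_n$ for some $C_n \in \mathcal{P}$, that lemma yields $|B_n| \ge |B_{n+1}|$, so the positive integers $|B_n|$ form a non-increasing sequence and are therefore eventually constant, say $|B_n| = k$ for all $n \ge N$. For such $n$, the equality $|B_n| = |B_{n+1}|$ forces $|C_n| = 1$ by part (2) of the lemma, so $C_n = \{c_n\}$ for some $c_n \in M$; moreover $c_n \ne 0$, since otherwise $B_n = B_{n+1}$. Thus, for every $n \ge N$, the set $B_n$ is simply the translate $B_{n+1} + c_n$ of $B_{n+1}$.

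Finally, I would track the minimum element $m_n := \min B_n$, which exists because each $B_n$ is a finite subset of $M \subseteq \mathbb{Q}_{\ge 0}$. Translation preserves order, so $m_n = m_{n+1} + c_n$ with $c_n \in M^\bullet$ for every $n \ge N$. Hence $m_{n+1}\mid_M m_n$ and $m_n \ne m_{n+1}$, so $(m_n + M)_{n \ge N}$ is a strictly ascending chain of principal ideals of $M$ — the inclusions are strict because $M$ is cancellative and reduced, so associated elements coincide — which contradicts the ACCP of $M$. The main obstacle, and the step requiring the most care, is the bookkeeping connecting divisibility in $\mathcal{P}$ to divisibility in $M$: one must invoke Lemma~\ref{lem:size of the sum} to stabilize the cardinalities, recognize that equal-cardinality divisibility is precisely translation by a singleton, and then use the minimum element as the marker that transports the infinite descent from $\mathcal{P}$ back down to $M$.
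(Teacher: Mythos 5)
Your proof is correct and follows essentially the same route as the paper: both hinge on Lemma~\ref{lem:size of the sum} to force the cardinalities $|B_n|$ to stabilize, deduce that the successive divisors are then nonzero singletons (i.e., that divisibility among equal-cardinality sets is translation), and transport the chain back to $M$, where the ACCP applies. The only structural difference is that the paper argues directly --- tracking all $k$ elements of $B_n$ and letting the resulting $k$ principal-ideal chains in $M$ stabilize --- whereas you argue by contradiction (using that $\ppf(M)$ is reduced and unit-cancellative to extract a strictly descending divisibility chain) and track only the minimum element; both versions are valid.
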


\begin{proof}
	For simplicity, set $\ppp :=  \ppf(M)$. Now let $(B_n + \ppp)_{n \in \nn_0}$ be an ascending chain of principal ideals of~$\ppp$ and let us prove that it stabilizes. From Lemma~\ref{lem:size of the sum} we know that there exists $N \in \nn$ such that $|B_n| = |B_N|$ for every $n > N$. If $|B_N| = 1$, then $(B_n + \ppp)_{n \in \nn_0}$ stabilizes since $M$ satisfies the ACCP. Assume then that $|B_N| > 1$. Let $(C_n)_{n \ge N}$ be a sequence of elements of $\ppp$ satisfying that $C_{n + 1} + B_{n + 1} = B_{n}$ for every $n \ge N$. From Lemma~\ref{lem:size of the sum} we know that $|C_n| = 1$ for every $n > N$, and then we can write $C_n = \{c_n\}$, for some $c_n \in M$. Let $b_{1,n}, b_{2,n}, \dots, b_{k,n}$, be the elements of $B_n$ listed in increasing order. Observe that $c_{n + 1} + b_{i,n + 1} = b_{i,n}$ for every $n \ge N$. Then we have that $(b_{i,n} + M)_{n \ge N}$ is an ascending chain of principal ideals of $M$ for every $i \in \ldb 1, k\rdb$. Since $M$ satisfies the ACCP, every one of those chains of principal ideals stabilizes. Let $t_i$ be a positive integer such that $b_{i,n} = b_{i,t_i}$ for every  $i \in \ldb 1, k\rdb$ and $n > t_i$. Set $t := \max\{t_i \mid 1\le i \le k\}$. Then $B_n = B_t$ for every $n > t$, and therefore, $(B_n + \ppp)_{n \in \nn}$ stabilizes. As a result, the power monoid $\ppp$ satisfies the ACCP.
\end{proof}

We conclude this section by presenting an example of a power monoid of a Puiseux monoid that is atomic but does not satisfy the ACCP. From the previous proposition it is clear that the corresponding Puiseux monoid is not ACCP. 

\begin{example}
	Take $r \in \qq \cap (0,1)$ with $\mathsf{n}(r) \ge 2$, and consider the Puiseux monoid $M_r := \langle r^n \mid n \in \nn_0 \rangle$. It follows from \cite[Theorem~2.3]{CGG20} that the monoid $M_r$ is atomic with $\mathcal{A}(M_r) = \{r^n \mid n \in \nn_0\}$, and it is well known that $M_r$ does not satisfy the ACCP. In fact, observe that the identity
	\[
		x_n := \frac{ \mathsf{n}(r)^{n}}{\mathsf{d}(r)^{n - 1} } = \frac{\mathsf{n}(r)^{n + 1} }{\mathsf{d}(r)^{n} } + \frac{ \big( \mathsf{n}(r) - \mathsf{d}(r) \big) \mathsf{n}(r)^{n}}{\mathsf{d}(r)^{n} }
	\]
	is true for every $n \in \nn$, which yields that the ascending chain of principal ideals $(x_n + M_r)_{n \in \nn}$ does not stabilize. In addition, it was proved in \cite[Example~$4.3$]{GPx} that $M_r$ is an MCD-monoid. Now set $\mathcal{P} := \ppf(M_r)$ and $X_n := \{ x_n\}$ for every $n \in \nn$. Observe that the power monoid $\mathcal{P}$ does not satisfy the ACCP either because $(X_n + \mathcal{P})_{n \in \nn}$ is an ascending chain of principal ideals of $\mathcal{P}$ that does not stabilize. Finally, the fact that $M_r$ is atomic and an MCD-monoid, in tandem with Theorem \ref{thm:ascend of atomicity}, ensures that $\mathcal{P}$ is an atomic monoid. Then $\mathcal{P}$ is a power monoid of a Puiseux monoid that is atomic but does not satisfies the ACCP.
\end{example}

\bigskip
\section{The Bounded and Finite Factorization Properties}
\label{sec:BF and FF}

The main purpose of this section is to study the bounded and finite factorization properties in the class of power monoids of Puiseux monoids, giving special attention to the ascent of these properties from a Puiseux monoid to its power monoid. Before focusing on the class consisting of all power monoids of Puiseux monoids, let us quickly argue that the restricted power monoid of every Puiseux monoid is an FFM. Although this result is known in greater generality (see \cite[Remark~5.5]{CT23}), for the sake of completeness we offer here an elementary and short inductive proof.

\begin{prop}
	The restricted power monoid of a Puiseux monoid is an FFM.
\end{prop}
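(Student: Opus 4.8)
The plan is to exploit that a Puiseux monoid $M$ is reduced, so that $M^\times = \{0\}$ and hence $\mathcal{P}_{\text{fin}, \times}(M)$ consists \emph{precisely} of the nonempty finite subsets of $M$ that contain $0$. The single observation driving everything is that divisors are subsets: if $S = A + C$ in $\mathcal{P}_{\text{fin}, \times}(M)$, then, since $0 \in C$, every $a \in A$ equals $a + 0 \in A + C = S$, and so $A \subseteq S$. As $S$ is finite, it therefore has only finitely many divisors, and in particular only finitely many atoms divide it. I would record this together with a second elementary fact: every atom $A$ satisfies $|A| \ge 2$, because the only element of cardinality $1$ is $\{0\}$, which is the identity.

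With these two facts in hand, I would prove the statement by strong induction on $|S|$. For the base case $|S| = 1$ we have $S = \{0\}$, the identity, whose only factorization is the empty one. For the inductive step, assume $|S| = k \ge 2$ and that every element of cardinality less than $k$ has finitely many factorizations. Each factorization $z$ of $S$ contains at least one atom $A$; removing one copy of $A$ from $z$ yields a factorization $z'$ of an element $C$ with $A + C = S$. Since $|A| \ge 2$, Lemma~\ref{lem:size of the sum}(2) gives $|C| < |A + C| = |S|$, so by the inductive hypothesis $\mathsf{Z}(C)$ is finite. Moreover, $A$ ranges over the finitely many atoms contained in $S$, and for each such $A$ the cofactor $C$, being itself a divisor of $S$, ranges over the finitely many subsets of $S$. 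The assignment $z \mapsto (A, z')$ is injective, since $z$ is recovered from the pair by adjoining one further copy of $A$ to $z'$, and its target is finite; hence $S$ has finitely many factorizations.

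The same induction also delivers atomicity, which is part of the definition of an FFM: if a non-identity $S$ is not itself an atom, then $S = A + C$ with $A$ and $C$ both non-identity, whence $|A|, |C| \ge 2$, and Lemma~\ref{lem:size of the sum}(2) forces $|A|, |C| < |S|$, so $A$ and $C$ are atomic by the inductive hypothesis and therefore so is $S$. I do not expect a genuine obstacle here; the only points requiring care are the bookkeeping passing from the unordered multiset $z$ to the pair $(A, z')$ (injectivity holds for \emph{any} choice of the stripped atom, since $z$ is reconstructible from $z'$ and $A$), and the repeated appeal to Lemma~\ref{lem:size of the sum}(2), whose applicability hinges precisely on the fact that atoms of the restricted power monoid never have cardinality $1$.
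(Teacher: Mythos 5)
Your proof is correct, and half of it coincides with the paper's: the atomicity argument (strong induction on cardinality, with Lemma~\ref{lem:size of the sum}(2) forcing both summands of a non-atom to be strictly smaller) is exactly what the paper does, and both proofs rest on your two structural observations --- every element of $\mathcal{P}_{\text{fin},\times}(M)$ contains $0$, so divisors are subsets, and $\{0\}$ is the only singleton, so atoms have cardinality at least $2$. Where you genuinely diverge is the finiteness of $\mathsf{Z}(Y)$. The paper argues this directly, with no induction: only finitely many atoms divide $Y$ (they are subsets of $Y$), and each such atom $A$ can occur only boundedly many times in any factorization of $Y$, since $mA$ dividing $Y$ forces $mA \subseteq Y$; a factorization is then a multiset drawn from finitely many atoms with bounded multiplicities, and there are finitely many such multisets. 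You instead run the cardinality induction a second time: strip one atom $A$ from a factorization $z$ of $S$ to get a pair $(A, z')$, where $z'$ factors a cofactor $C$ satisfying $|C| < |S|$ by Lemma~\ref{lem:size of the sum}(2); the stripping map is injective, and its target --- finitely many atoms $A \subseteq S$ paired with the sets $\mathsf{Z}(C)$ for the finitely many divisors $C$ of $S$, each finite by the inductive hypothesis --- is finite. What your route buys is freedom from any numerical estimate: the paper's multiplicity bound is stated as $m > \max Y$, which is accurate only after adjusting for the fact that $A$ may consist of rationals smaller than $1$ (the containment $mA \subseteq Y$ really gives $m \cdot \max A \le \max Y$), whereas your argument needs nothing beyond the cardinality drop. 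What the paper's route buys is brevity and an explicit picture of what factorizations of $Y$ look like, without the bookkeeping needed to verify injectivity of the stripping map.
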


\begin{proof}
	Let $M$ be a Puiseux monoid, and set $\ppp := \ppx(M)$. Fix $X \in \ppp$ with $|X| \ge 2$, and let us show that $X$ is atomic in $\ppp$. We proceed by (strong) induction on the cardinality of $X$. The case $|X| = 2$ is clear. Take $k \in \nn$ with $k \ge 2$, and assume that each element of $\ppp$ whose size belongs to $\ldb 2,k \rdb$ is atomic. Suppose now that $|X| = k+1$. If $X$ is not an atom, then there exist $A,B \in \ppp$ with $\max \{|A|, |B|\} < |X|$ such that $X=A+B$ and, as $|A| < |X| = k+1$ and $|B| < |X| = k+1$, our inductive hypothesis ensures that both $A$ and $B$ are atomic, whence $X$ must be atomic.
	
	Now fix $Y \in \ppp^\bullet$. Since every atom that divides $Y$ in $\ppp$ is a subset of $Y$, there are only finitely many atoms dividing $Y$ in $\ppp$. Also, if $A$ is an atom dividing $Y$ in $\ppp$, then no element of the form $m A$ can divide $Y$ when $m > \max Y$. Thus, $Y$ has only finitely many factorizations. Hence $\ppp$ is an FFM, as desired.
\end{proof}

Here we present an example of a Puiseux monoid whose restricted power monoid is neither an HFM nor an LFM.

\begin{example}
	Consider the power monoid $\ppp := \ppx(\nn_0)$. Observe that
	\[
		\{ 0,1 \} + \{ 0,1 \} + \{ 0,1 \} \quad \text{and} \quad \{ 0,1 \} + \{ 0, 2\}
	\]
	are two factorizations of the element $\{0, 1, 2, 3\}$ having different lengths. Hence $\ppp$ is not an HFM. On the other hand, observe that
	\[
		\{ 0,1 \} + \{ 0,2 \} + \{ 0,2 \} \quad \text{and} \quad \{ 0,1 \} + \{ 0, 1\} + \{ 0, 3\}
	\]
	are two different factorizations of the element $\{0, 1, 2, 3, 4, 5\}$ having the same length. As a consequence, $\ppp$ is not an LFM.
\end{example}

We proceed to argue that both the bounded and the finite factorization properties ascend from any Puiseux monoid to its power monoid.

\begin{theorem}\label{BF and FF asscend}
	For a Puiseux monoid $M$, the following statements hold.
	\begin{enumerate}
		\item  If $M$ is a BFM, then $\mathcal{P}_{\emph{fin}}(M)$ is also a BFM.
		\smallskip
		
		\item  If $M$ is an FFM, then $\mathcal{P}_{\emph{fin}}(M)$ is also an FFM.
	\end{enumerate}
\end{theorem}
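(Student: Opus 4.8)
The plan is to treat the two statements separately, in both cases exploiting Lemma~\ref{lem:size of the sum} to control how singleton and non-singleton atoms contribute to a factorization in $\ppp := \ppf(M)$. Throughout I will use that $\ppp$ is reduced (its only unit is $\{0\}$, since $M$ is reduced), that $\min$ and $\max$ are additive over the Minkowski sum, and that a singleton $\{s\}$ is an atom of $\ppp$ precisely when $s$ is an atom of $M$ (if $s = u + v$ with $u,v$ non-invertible in $M$, then $\{s\} = \{u\} + \{v\}$). Since $M$ is unit-cancellative, being a BFM or an FFM forces $M$ to be atomic, so $\max \mathsf{L}_M(x)$ is a well-defined finite quantity for each $x \in M$.

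For (1), fix $B \in \ppp$ and an arbitrary factorization $B = A_1 + \cdots + A_n$ into atoms of $\ppp$. I split the factors into those of size $\ge 2$ and the singletons. By Lemma~\ref{lem:size of the sum}, adjoining a non-singleton strictly increases cardinality while adjoining a singleton preserves it; building the sum up from the product of the singletons (which has size $1$) shows that the number $m$ of non-singleton factors satisfies $m \le |B| - 1$. The singleton factors $\{s_1\}, \dots, \{s_t\}$ satisfy $s_j \in \at(M)$ and have product $\{\sigma\}$ with $\sigma = s_1 + \cdots + s_t$, so $s_1 \cdots s_t \in \mathsf{Z}(\sigma)$ gives $t \le \max \mathsf{L}_M(\sigma)$. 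Writing $B$ as (the sum of the non-singleton factors) $+\, \{\sigma\}$ and comparing minima yields $\sigma \mid_M \min B$; hence a longest factorization of $\sigma$ extends to one of $\min B$, giving $\max \mathsf{L}_M(\sigma) \le \max \mathsf{L}_M(\min B)$. Since $M$ is a BFM this last quantity is finite, so $n = m + t \le (|B|-1) + \max \mathsf{L}_M(\min B)$, a bound depending only on $B$. Thus $\ppp$ is a BFM.

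For (2), recall that every FFM is a BFM, so by part (1) the monoid $\ppp$ is already a BFM; let $\beta := \max \mathsf{L}_{\ppp}(B)$. It then suffices to show that only finitely many atoms of $\ppp$ divide a fixed $B$, since every factorization of $B$ is a multiset of size at most $\beta$ drawn from these atoms. So let $A$ be an atom of $\ppp$ dividing $B$, say $B = A + C$, and set $r := \min C$ and $S := A + r$. Then $S \subseteq B$, and since $\min S = \min A + \min C = \min B$ one gets the translation identity $A - \min A = S - \min B$. As $S$ ranges over the (finitely many) subsets of $B$, the translate $A - \min A$ takes only finitely many values. Moreover $\min B = \min A + \min C$ forces $\min A \mid_M \min B$, and in an FFM every element has only finitely many divisors; thus $\min A$ also ranges over a finite set. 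Consequently $A = (A - \min A) + \min A$ lies in a finite set, which proves the claim and hence that $\ppp$ is an FFM.

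The elementary verifications are the additivity of $\min$ and $\max$ and the equivalence between singleton atoms of $\ppp$ and atoms of $M$. The main obstacle is the heart of part (2): bounding the atoms of $\ppp$ that divide $B$. The crux is the identity $A - \min A = S - \min B$ with $S \subseteq B$, which reduces the ``shape'' of a dividing atom to a subset of $B$, used together with the standard fact that in an FFM each element has finitely many divisors (each divisor arises, via the factorization homomorphism, from a subword of one of the finitely many factorizations of $\min B$).
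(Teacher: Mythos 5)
Your length bound in part (1) --- at most $|B|-1$ non-singleton factors by Lemma~\ref{lem:size of the sum}, plus at most $\max \mathsf{L}_M(\min B)$ singleton factors since the singletons compose to $\{\sigma\}$ with $\sigma \mid_M \min B$ --- is correct, and so is the translation argument in part (2). But there is a genuine gap: you never prove that $\ppf(M)$ is \emph{atomic}, and in this paper (following Anderson--Anderson--Zafrullah) both ``BFM'' and ``FFM'' mean an \emph{atomic} monoid with the corresponding finiteness property. Bounding the lengths of factorizations into atoms says nothing about elements that have no factorization at all, so your part (1) delivers only half of the BFM definition; part (2) then inherits the problem, since $\beta := \max \mathsf{L}_{\ppf(M)}(B)$ is undefined unless $\mathsf{Z}(B)$ is nonempty. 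This is not a formality in the present context: the main point of Section~\ref{sec:atomicity} is that atomicity does \emph{not} in general ascend from an atomic Puiseux monoid to its power monoid (Example~\ref{ex:atomic PM not 2-MCD}), so the atomicity of $\ppf(M)$ under the BFM hypothesis is exactly the part that cannot be taken for granted. The paper supplies it by first showing that the ACCP ascends (Proposition~\ref{prop:ACCP ascends from PM to PM}) and then invoking that a unit-cancellative monoid satisfying the ACCP is atomic.

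Fortunately, your own counting closes the gap with little extra work, so this is a missing step rather than a wrong approach: run the identical count on an arbitrary decomposition $B = A_1 + \cdots + A_n$ into \emph{non-invertible} elements of $\ppf(M)$, not necessarily atoms. Lemma~\ref{lem:size of the sum} still caps the number of non-singleton factors at $|B|-1$; and since $M$ is atomic (being a BFM), each nonzero $s_j$ occurring in a singleton factor has a factorization of length at least $1$, so $\sigma = s_1 + \cdots + s_t$ has one of length at least $t$, giving $t \le \max \mathsf{L}_M(\min B)$ as before. Thus all decompositions of $B$ into non-units have bounded length; taking one of maximal length, every factor must be an atom, so $B$ is atomic. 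With that repair both parts stand, and your part (2) is then a genuinely different route from the paper's: the paper observes that every factor occurring in a factorization of $B$ is a nonempty subset of the finite set $C$ of divisors in $M$ of elements of $B$, and counts multisets of such subsets, whereas you bound the divisors of $B$ in $\ppf(M)$ directly via the identity $A - \min A = S - \min B$ with $S \subseteq B$, combined with the finiteness of the set of divisors of $\min B$ in $M$. Both arguments ultimately rest on the same fact that each element of a reduced FFM has only finitely many divisors.
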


\begin{proof}
	Set $\ppp := \ppf(M)$.
	\smallskip
	 
	(1) Assume that $M$ is a BFM. Now suppose, towards a contradiction, that the power monoid $\ppp$ is not a BFM. Since $M$ is a cancellative BFM, it must satisfy the ACCP and, therefore, it follows from Proposition~\ref{prop:ACCP ascends from PM to PM} that $\ppp$ also satisfies the ACCP. Thus, from the fact that $\ppp$ is a unit-cancellative monoid satisfying the ACCP, one infers that $\ppp$ is atomic.
	
	As $\ppp$ is atomic but not a BFM, there must exist $B \in \ppp$ such that $\mathsf{L}(B)$ is not finite. Since $M$ is a BFM, we see that $|B| \ge 2$. Fix a nonzero $b \in B$, and then take $\ell \in \nn$ such that $\max \mathsf{L}_M(b) < \ell$ (such an $\ell$ exists because $M$ is a BFM). From Lemma \ref{lem:size of the sum} we know that every factorization in $\mathsf{Z}(B)$ has at most $|B|$ atoms of cardinality greater than $1$. This, along with the fact that $\mathsf{L}(B)$ is not finite, guarantees the existence of a factorization in $\mathsf{Z}(B)$ having at least $\ell$ atoms of cardinality $1$, namely,
	\[
		(\{a_1\} + \{a_2\} + \dots + \{a_\ell\}) + Z_\ell \in \mathsf{Z}(B),
	\]
	where $\{a_1\}, \{a_2\}, \dots, \{a_\ell\}$ are atoms in $\mathcal{A}(\ppp)$ for some $a_1, a_2, \dots, a_\ell \in M^\bullet$ and $Z_\ell$ is a factorization of $C := B - (\{a_1\} + \{a_2\} + \dots + \{a_\ell\})$ in $\ppp$. Now we can take $c \in C$ such that $b = (a_1 + a_2 + \dots + a_\ell) + c$. This equality, along with the fact that $c$ is an atomic element in $M$, yields a factorization of $b$ in $M$ with length at least $\ell$, which contradicts that $\max \mathsf{L}_M(b) < \ell$.
	\smallskip
	
	(2) Assume now that $M$ is an FFM. In particular, $M$ is a BFM, and so it follows from part~(1) that the power monoid $\ppp$ is also a BFM. In order to prove that $\ppp$ is an FFM it suffices to show that $\ppp$ is an LFFM. To argue this, we fix an element $B$ in $\ppp$ and a positive integer $\ell$, and proceed to verify that $B$ has only finitely many factorizations of length $\ell$. Set
	\[
		C := \{c \in M \mid c \text{ divides } b \text{ for some }b \in B\}.
	\]
	As $M$ is a reduced FFM, it follows from \cite[Corollary~2]{fHK92} that every element of $M$ has only finitely many divisors. This, in tandem with the fact that $B$ is a finite set, guarantees that $C$ is a finite set. Now observe that a factorizations of $B$ of length~$\ell$ in the power monoid $\ppp$ can be thought of as a multisets of cardinality $\ell$ consisting of nonempty subsets of $C$, and it is clear that there are only finitely many of such multisets (indeed, at most the number of $\ell$-tuples of nonempty subsets of $C$). This, along with the fact that $\ppp$ is atomic, allows us to conclude that $\ppp$ is an LFFM. Hence the power monoid $\ppp$ is also an FFM, as desired.
\end{proof}

We conclude this section showing that, unlike the bounded and finite factorization properties, the length-finite factorization property does not ascend from a Puiseux monoid to its power monoid. In order to do so, we reuse the monoid constructed in Example~\ref{ex:atomic PM not 2-MCD}.

\begin{prop}
	There exists a Puiseux monoid that is an LFFM whose power monoid is not an LFFM.
\end{prop}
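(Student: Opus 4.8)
The plan is to reuse the atomic Puiseux monoid $M$ of Example~\ref{ex:atomic PM not 2-MCD}, whose atoms are the three sequences $(a_n)$, $(b_n)$, and $(c_n)$, and to prove the two halves separately. First I would check that $M$ itself is an LFFM. The crucial observation is that each sequence tends to $0$: indeed $a_n = 1/p_{3n} \to 0$, while $b_n$ and $c_n$ are $1/p_{3n+1}$ (resp.\ $1/p_{3n+2}$) times a bounded positive quantity, so they vanish as well. Hence $0$ is the only accumulation point of $\mathcal{A}(M)$, and for each $\varepsilon > 0$ only finitely many atoms exceed $\varepsilon$. Fixing $x \in M^\bullet$ and $k \in \nn$, I would list the atoms of a length-$k$ factorization in non-increasing order $\alpha_1 \ge \cdots \ge \alpha_k$ and bound the choices greedily: the partial sums stay strictly below $x$ until the last step, so each $\alpha_j$ is bounded below by a positive constant determined by the earlier choices, and only finitely many atoms lie above any positive bound. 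This gives finitely many factorizations of each length, so $M$ is an LFFM (though, as the text records, it is not a BFM).

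To show that $\ppf(M)$ is \emph{not} an LFFM, I would exhibit a single $B$ with infinitely many factorizations of one fixed length. The reduction rests on Lemma~\ref{lem:size of the sum}: in any factorization of a finite set, at most $|B|-1$ atoms have size $\ge 2$, and every factorization splits as a product of such set-valued atoms factoring $B-s$, together with singletons $\{\sigma_1\},\dots,\{\sigma_t\}$ whose sum $s=\sum_j\sigma_j$ must be a common divisor of all elements of $B$. Thus producing infinitely many length-$k$ factorizations amounts to producing infinitely many common divisors $s$ of $B$, each supporting a set-valued decomposition of $B-s$, with total length pinned to $k$. Because the floor contributes its own factorization length, the decisive point is that these infinitely many $s$ must have \emph{constant} factorization length (a floor of growing length, such as $d_m=\sum_{i=0}^m a_i$, would force the set-valued part to shrink and could match a fixed total length only finitely often).

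The engine is exactly the failure of the MCD property from Example~\ref{ex:atomic PM not 2-MCD}. Since $M$ is not a BFM, $\tfrac{4}{5}$ carries, for each fixed $\ell$, infinitely many divisors of length exactly $\ell$: for every $m$ with $p_{3m+1}>\ell$ one has $\ell\,b_m \mid_M \tfrac45$, with cofactor $d_m+(p_{3m+1}-\ell)b_m \in M$. A natural candidate is then a three-element set $B$ built around $\tfrac45$ together with two auxiliary elements chosen pairwise incomparable to it, arranged so that the constant-length divisors $\ell\,b_m$ become common divisors of all of $B$ and the remainders $B-\ell\,b_m$ are irreducible (coprime) set-valued atoms. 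Distinct $m$ then give distinct factorizations of $B$ (the floor uses the distinct atoms $b_m$), all of the single length $\ell$ plus the size of the remainder, which would prove that $\ppf(M)$ is not an LFFM.

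The hard part will be guaranteeing irreducibility of the remainder $B-\ell\,b_m$ for infinitely many floors simultaneously. The obstruction is structural and is precisely what deprives $\tfrac45$ and $\tfrac67$ of a maximal common divisor: shared divisibility by the small atoms $a_j$, together with the fact that $e \mid_M y$ forces $e \mid_M y+p$ whenever $p\in M$, tends to reintroduce a common divisor (typically $b_m$ itself) into $B-\ell\,b_m$, collapsing the candidate factorizations onto factorizations of one element of $M$, which are length-finite. Overcoming this requires choosing the extra elements of $B$ with gaps lying \emph{outside} $M$ (so divisors do not transfer automatically across the set) and with enough arithmetic independence---playing the $b$-atoms against the $c$-atoms, which are mutually coprime---to annihilate the persistent common divisor while keeping the floors of constant length. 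Verifying these coprimality conditions through the $p_{3n+1}$- and $p_{3n+2}$-adic valuations, in the same spirit as Example~\ref{ex:atomic PM not 2-MCD}, is where the technical weight of the argument lies.
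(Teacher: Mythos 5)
Your first half is correct, and it is actually more self-contained than the paper's own argument: the paper deduces that $M$ is an LFFM by observing that $M$ is atomic and co-well-ordered (being generated by the union of three decreasing sequences) and then citing \cite[Theorem~3.4]{JKK23}, whereas your greedy bound---only finitely many atoms of $M$ exceed any $\varepsilon > 0$, and in a length-$k$ factorization of $x$ listed in non-increasing order each successive atom is at least $\bigl(x - (\text{partial sum})\bigr)/(k-j+1) > 0$---proves the same fact directly and elementarily. That part stands.

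The second half, however, has a genuine gap, and it stems from an unnecessary detour. In this paper an LFFM is in particular atomic (this is exactly the inference the paper's proof relies on, and it matches the introduction, which presents the length-finite factorization property as a weakening of the finite factorization property for atomic monoids). Since Example~\ref{ex:atomic PM not 2-MCD} shows that $M$ is not $2$-MCD, part~(2) of Theorem~\ref{thm:ascend of atomicity} immediately yields that $\ppp_{\text{fin}}(M)$ is not atomic---indeed, its proof shows that the element $\{\frac45, \frac67\}$ admits no factorization at all---and hence $\ppp_{\text{fin}}(M)$ is not an LFFM. Nothing more is needed. You instead set out to exhibit an element $B$ with infinitely many factorizations of one fixed length, i.e., to violate length-finiteness in the stronger, literal sense. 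Your structural reduction is sound (at most $|B|-1$ non-singleton atoms by Lemma~\ref{lem:size of the sum}; the singletons sum to a common divisor of the elements of $B$; the floors must have constant factorization length), but the construction is never carried out: the irreducibility of the remainders $B - \ell b_m$ for infinitely many $m$ simultaneously is precisely the step you flag as ``the technical weight'' and leave unresolved, and your own discussion of the obstruction (small atoms $a_j$ re-entering as common divisors of the remainder) suggests it may not be fillable---it is not even clear that an element of $\ppp_{\text{fin}}(M)$ with infinitely many equal-length factorizations exists, since the phenomenon the paper actually exploits is a shortage of factorizations (non-atomicity), not an abundance of them. As written, your second half is a plan with an acknowledged hole rather than a proof; replacing it with the appeal to Theorem~\ref{thm:ascend of atomicity}(2) closes the argument.
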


\begin{proof}
	First, we argue that if $(a_n)_{n \ge 1}$ and  $(b_n)_{n \ge 1}$ are two co-well-ordered sequences consisting of positive rationals, then the sequence $(c_n)_{n \ge 1}$ defined as $c_{2n} := a_n$ and $c_{2n-1} = b_n$ for every $n \in \nn$ is also co-well-ordered. Suppose to the contrary that the sequence $(c_n)_{n \ge 1}$ is not co-well-ordered. Then we can find a strictly increasing subsequence $(c'_n)_{n \ge 1}$ of $(c_n)_{n \ge 1}$. Since the terms of $(c'_n)_{n \ge 1}$ are terms of either $(a_n)_{n \ge 1}$ or $(b_n)_{n \ge 1}$, the sequence $(c'_n)_{n \ge 1}$ must contain a subsequence $(c''_n)_{n \ge 1}$ that is also a subsequence of either $(a_n)_{n \ge 1}$ or $(b_n)_{n \ge 1}$. Since $(c'_n)_{n \ge 1}$ is strictly increasing, so is $(c''_n)_{n \ge 1}$. Now if $(c''_n)_{n \ge 1}$ is a subsequence of $(a_n)_{n \ge 1}$, then we obtain a contradiction with the fact that $(a_n)_{n \ge 1}$ is a co-well-ordered sequence. If $(c''_n)_{n \ge 1}$ is a subsequence of $(b_n)_{n \ge 1}$, then we obtain a similar contradiction.
	
	By the argument given in the previous paragraph, we infer that the sum of two Puiseux monoids that are co-well-ordered is again a co-well-ordered, which inductively implies that the sum of finitely many co-well-ordered Puiseux monoids is again co-well-ordered. Now let $M$ be the Puiseux monoid constructed in Example~\ref{ex:atomic PM not 2-MCD}; that is,
	\[
		M := \bigg\langle \frac{1}{p_{3n}}, \ \frac{1}{p_{3n+1}} \bigg( \frac45 - \sum_{i=0}^n \frac1{p_{3i}} \bigg), \frac{1}{p_{3n+2}} \bigg( \frac67 - \sum_{i=0}^n \frac1{p_{3i}} \bigg) \ \Big{|} \ n \in \mathbb{N}_0 \bigg\rangle.
	\]
	The monoid $M$ is generated by the union of the underlying set of three decreasing sequences, then $M$ can be naturally expressed as the sum of three co-well-ordered Puiseux monoids. Thus, $M$ is a co-well-ordered Puiseux monoid. We have proved in Example~\ref{ex:atomic PM not 2-MCD} that $M$ is atomic, and so it follows from \cite[Theorem~3.4]{JKK23} that $M$ is an LFFM. On the other hand, we have also argued in Example~\ref{ex:atomic PM not 2-MCD} that $M$ is not $2$-MCD, and so it follows from part~(2) of Theorem~\ref{thm:ascend of atomicity} that the power monoid of $M$ is not atomic. Hence the power monoid of $M$ is not an LFFM even though $M$ is an LFFM.
\end{proof}

\bigskip
\section*{Acknowledgments}

The authors are grateful to their mentor, Dr. Felix Gotti, for proposing this project and for his guidance during the preparation of this paper. While working on this paper, the authors were part of CrowdMath, a massive year-long math research program hosted by MIT PRIMES and the Art of Problem Solving (AoPS). The authors express their gratitude to the directors and organizers of CrowdMath, MIT PRIMES, and AoPS for making this research experience possible.

\bigskip


\begin{thebibliography}{20}
		
	\bibitem{jA02} J. Almeida: \emph{Some key problems on finite semigroups}, Semigroup Forum \textbf{64} (2002) 159--179.
		
	\bibitem{AAZ90} D.~D. Anderson, D.~F. Anderson, and M. Zafrullah: \emph{Factorizations in integral domains}, J. Pure Appl. Algebra \textbf{69} (1990) 1--19.

	\bibitem{AG22} D. F. Anderson and F. Gotti: \emph{Bounded and finite factorization domains}. In: Rings, Monoids, and Module Theory (Eds. A. Badawi and J. Coykendall) pp. 7--57. Springer Proceedings in Mathematics \& Statistics, Vol. \textbf{382}, Singapore, 2022.
	
	\bibitem{AT21} A. A. Antoniou and S. Tringali: \emph{On the arithmetic of power monoids and sumsets in cyclic groups}, Pacific J. Math. \textbf{312} (2021) 279--308.

	\bibitem{AG16} A. Assi and P. A. Garc\'ia-S\'anchez: \emph{Numerical Semigroups and Applications}. New York: Springer- Verlag, 2016.
	
	\bibitem{BG20} N. R. Baeth and F. Gotti: \emph{Factorization in upper triangular matrices over information semialgebras}, J. Algebra \textbf{562} (2020) 466--496.
	
	\bibitem{BG23} P. Y. Bienvenu and A. Geroldinger: \emph{On algebraic properties of power monoids of numerical monoids}, Israel J. Math. (to appear). Preprint on arXiv: https://arxiv.org/pdf/2205.00982.pdf
	
	\bibitem{BG09} W. Bruns and J. Gubeladze: \emph{Polytopes, Rings and K-theory}, Springer Monographs in Mathematics, Springer, Dordrecht, 2009.
	
	\bibitem{lC60} L. Carlitz: \emph{A characterization of algebraic number fields with class number two}, Proc. Amer. Math Soc. \textbf{11} (1960) 391--392.
	
	\bibitem{CGG20} S. T. Chapman, F. Gotti, and M. Gotti: \emph{Factorization invariants of Puiseux monoids generated by geometric sequences}, Comm. Algebra \textbf{48} (2020) 380--396.
	
	\bibitem{CGG20a} S. T. Chapman, F. Gotti, and M.  Gotti: \emph{When is a Puiseux monoid atomic?}, Amer. Math. Monthly \textbf{128} (2021) 302--321.

	\bibitem{pC68} P.~M. Cohn: \emph{Bezout rings and and their subrings}, Proc. Cambridge Philos. Soc. \textbf{64} (1968) 251--264.

	\bibitem{CT23} L. Cossu and S. Tringali: \emph{Factorization under local finiteness condition}, J. Algebra \textbf{630} (2023) 128--161.

	\bibitem{CG19} J. Coykendall and F. Gotti: \emph{On the atomicity of monoid algebras}, J. Algebra \textbf{539} (2019) 138--151.

	\bibitem{FT18} Y. Fan and S. Tringali: \emph{Power monoids: A bridge between factorization theory and arithmetic combinatorics}, J. Algebra \textbf{512} (2018) 252--294.

	\bibitem{GLTZ21} W. Gao, C. Liu, S. Tringali, and Q. Zhong: \emph{On half-factoriality of transfer Krull monoids}, Comm. Algebra \textbf{49} (2021) 409--420.

	\bibitem{GR09} P.~A. Garc\'ia-S\'anchez and J.~C. Rosales: \emph{Numerical Semigroups}, Developments in Mathematics, vol.~20, Springer-Verlag, New York, 2009.
	
	\bibitem{aG16} A. Geroldinger: \emph{Sets of lengths}, Amer. Math. Monthly \textbf{123} (2016) 960--988.

	\bibitem{GGT21} A. Geroldinger, F. Gotti, and S. Tringali: \emph{On strongly primary monoids, with a focus on Puiseux monoids}, J. Algebra \textbf{567} (2021) 310--345.

	\bibitem{GH06} A. Geroldinger and F. Halter-Koch: \emph{Non-unique Factorizations: Algebraic, Combinatorial and Analytic Theory}, Pure and Applied Mathematics Vol. 278, Chapman \& Hall/CRC, Boca Raton, 2006.

	\bibitem{GZ21} A. Geroldinger and Q. Zhong: \emph{A characterization of length-factorial Krull monoids}, New York J. Math. \textbf{27} (2021) 1347--1374.

	\bibitem{GZ20} A. Geroldinger and Q. Zhong: \emph{Factorization theory in commutative monoids}, Semigroup Forum \textbf{100} (2020) 22--51.
	
	\bibitem{rG84} R. Gilmer: \emph{Commutative Semigroup Rings}, The University of Chicago Press, Chicago, 1984.
		
	

	\bibitem{fG17} F. Gotti: \emph{On the atomic structure of Puiseux monoids}, J. Algebra Appl. \textbf{16} (2017), 1750126.
	
	\bibitem{fG20} F. Gotti: \emph{The system of sets of lengths and the elasticity of submonoids of a finite-rank free commutative monoid}, J. Algebra Appl. \textbf{19} (2020) 2050137.

	\bibitem{GL23} F. Gotti and B. Li: \emph{Atomic semigroup rings and the ascending chain condition on principal ideals}, Proc. Amer. Math. Soc. \textbf{151} (2023) 2291--2302.
	
	\bibitem{GL23a} F. Gotti and B. Li: \emph{Divisibility and a weak ascending chain condition on principal ideals}. Submitted. Preprint on arXiv: https://arxiv.org/abs/2212.06213
	

	\bibitem{GPx} F. Gotti and H. Polo: \emph{On the arithmetic of polynomial semidomains}, Forum Mathematicum \textbf{35} (2023) 1179-1197.
	

	\bibitem{GR23} F. Gotti and H. Rabinovitz: \emph{On the ascent of atomicity to one-dimensional monoid algebras}. Submitted. Preprint on arXiv: https://arxiv.org/abs/2310.18712



	\bibitem{aG74} A. Grams: \emph{Atomic rings and the ascending chain condition for principal ideals}. Math. Proc. Cambridge Philos. Soc. {\bf 75} (1974), 321--329.

	\bibitem{pG01} P.~A. Grillet: \emph{Commutative Semigroups}, Advances in Mathematics Vol. 2, Kluwer Academic Publishers, Boston, 2001.
	
	\bibitem{fHK92} F. Halter-Koch: \emph{Finiteness theorems for factorizations}, Semigroup Forum \textbf{44} (1992) 112--117.

	\bibitem{JKK23} H. Jiang, S. Kanungo, and H. Kim: \emph{A weaker notion of the finite factorization property}. Communications of the Korean Mathematical Society (to appear). Preprint on arXiv: https://arxiv.org/abs/2307.09645.

	\bibitem{jP86} J. E. Pin: \emph{Power semigroups and related varieties of finite semigroups}. In: Semigroups and Their Applications (Eds. S.M. Goberstein and P. M. Higgins) pp. 139--152. Proc. Int. Conf. ``Algebraic Theory of Semigroups and Its Applications", California State University at Chico, 1986.
	
	\bibitem{hP22} H. Polo: \emph{A characterization of finite factorization positive monoids}, Commun. Korean Math. Soc. \textbf{37} (2022) 669--679.
	
	\bibitem{mR93} M. Roitman: \emph{Polynomial extensions of atomic domains}, J. Pure Appl. Algebra \textbf{87} (1993) 187--199.
	
	\bibitem{tT86} T. Tamura: \emph{Isomorphism problem of power semigroups of completely 0-simple semigroups}, J. Algebra \textbf{98} (1986) 319--361.

	\bibitem{TY23} S. Tringali and W. Yan: \emph{A conjecture by Bienvenu and Geroldinger on power monoids}, Preprint on arXiv: https://arxiv.org/abs/2310.17713.

	\bibitem{aZ76} A. Zaks: \emph{Half-factorial domains}, Bull. Amer. Math. Soc. \textbf{82} (1976) 721--723.
	
\end{thebibliography}
\end{document}